\documentclass[11pt]{article}

\usepackage[utf8]{inputenc}
\usepackage[T1]{fontenc}
\usepackage[english]{babel}
\usepackage{amsmath,amssymb,amsthm,mathtools}
\usepackage{geometry}
\usepackage[expansion=false]{microtype}
\usepackage{hyperref}
\usepackage{enumitem}
\usepackage{authblk}
\usepackage[numbers,sort&compress]{natbib}

\hypersetup{
    colorlinks=true,
    linkcolor=black,
    citecolor=black,
    urlcolor=blue
}

\numberwithin{equation}{section}

\theoremstyle{plain}
\newtheorem{theorem}{Theorem}[section]
\newtheorem{corollary}[theorem]{Corollary}
\newtheorem{lemma}[theorem]{Lemma}
\newtheorem{proposition}[theorem]{Proposition}

\theoremstyle{definition}
\newtheorem{definition}[theorem]{Definition}

\theoremstyle{remark}
\newtheorem{remark}[theorem]{Remark}

\newcommand{\R}{\mathbb R}
\newcommand{\norm}[1]{\left\lVert #1\right\rVert}
\newcommand{\cR}{\mathcal R}

\title{An abstract averaging method for quasilinear equations}

\author[1]{Jean Mawhin}
\author[2]{Jorge Novoa}

\affil[1]{Institut de Recherche en Mathématique et Physique,
Université Catholique de Louvain,
Chemin du Cyclotron 2, 1348 Louvain-la-Neuve, Belgium}

\affil[2]{Departamento de Ingeniería Matemática,
Facultad de Ciencias Físicas y Matemáticas,
Universidad de Chile,
Beauchef 851, 8370456 Santiago, Chile}

\date{}

\begin{document}

\maketitle

\begin{abstract}
We develop an abstract averaging method for equations
$$Mx=\varepsilon N(x,\varepsilon),$$
in Banach spaces. The method applies when the equation can be separated into
an averaged compatibility condition and a nonlinear complementary problem,
and it allows the principal operator $M$ to be genuinely nonlinear. We prove
that every nondegenerate zero of the corresponding averaged equation generates,
for all sufficiently small nonzero values of $\varepsilon$, a nearby solution.
Under a suitable small Lipschitz condition this solution is locally unique,
while in the differentiable case it belongs to a unique local $C^1$ branch.
The framework contains the abstract linear Fredholm averaging theory as a
special case, but also covers quasilinear problems for which the principal
operator has no invertible linearization at the reference state. We also
determine the leading asymptotic profile of the solutions. Applications are
given to a vector $p$-Laplacian Duffing system and to a nonsmooth problem in
a Hilbert space.
\end{abstract}

\noindent\textbf{Keywords:} Averaging method; quasilinear equations; periodic solutions; $\phi$-Laplacian; Duffing systems.

\section{Introduction}

The averaging method relates small-parameter periodic problems to a simpler
equation on the set of unperturbed solutions. For the classical first-order
system
\begin{equation}\label{eq:classical}
    x'=\varepsilon f(t,x,\varepsilon),
    \qquad x(0)=x(T),
\end{equation}
the relevant equation is
\begin{equation}\label{eq:classical-bif}
    \int_0^T f(t,a,0)\,dt=0.
\end{equation}
Under the usual nondegeneracy condition, each zero $a^*$ of
\eqref{eq:classical-bif} gives rise to a nearby $T$-periodic solution of
\eqref{eq:classical} for small $|\varepsilon|$. This classical principle can
be proved in several ways, including the Poincar\'e operator and the
Cesari--Hale approach; see \cite{RoM,Sanders2007} and the references therein.
\medskip

One useful way to view the method is to separate the periodic problem into
two parts. For $\varepsilon\ne0$, \eqref{eq:classical} is equivalent to
\[
    x'=\varepsilon f(t,x,\varepsilon),
    \qquad
    \int_0^T f(t,x(t),\varepsilon)\,dt=0.
\]
The first equation determines the oscillatory component, while the second is
the compatibility condition that selects the correct constant state when
$\varepsilon=0$. This point of view leads naturally to an implicit-function
formulation of averaging; see \cite{Maw1}. Related fixed-point formulations
appear in \cite{RoM}, and Cid, Mawhin and Zima \cite{CidMawhinZima2021}
developed an abstract version for equations
\[
    Lx=\varepsilon N(x,\varepsilon),
\]
where $L$ is a Fredholm operator of index zero. Their reduction uses the
bounded inverse of $L$ on a complement of $\ker L$ and leaves a
finite-dimensional compatibility equation on the kernel.
\medskip

This linear decomposition is very effective, but it also points to the main
difficulty in the quasilinear case. If the principal operator is nonlinear,
there may be no linear inverse on the complementary space and, in fact, no
useful invertible linearization at all. The averaged equation can still be
identified, but one must solve the complementary part without replacing the
nonlinear operator by a linear one.
\medskip

This situation already appears in periodic equations of the form
\begin{equation}\label{eq:phi-intro}
    \bigl(\phi(u')\bigr)'
    =\varepsilon f(t,u,u',\varepsilon),
\end{equation}
which include $p$-Laplacian and $p$-Laplacian-like operators, relativistic
acceleration and prescribed mean curvature operators. Their periodic theory
has been extensively studied by topological and variational methods; see, for
example,
\cite{ManasevichMawhin1998,ManasevichMawhin2000,Mawhin2001,Maw2,GMMT}.
\medskip

In our preceding paper \cite{MawhinNovoaIFT}, we treated
\eqref{eq:phi-intro} by a direct implicit-function argument. That approach
requires $\phi^{-1}$ to be $C^1$ near the origin and
\[
    D\phi^{-1}(0)
\]
to be a continuous isomorphism. It therefore covers, among others, the
classical second-order and relativistic cases and gives a unique local branch
near each nondegenerate zero of the averaged equation.
\medskip

The restriction is already visible for the $p$-Laplacian:
\[
    \Phi_p(\xi)=|\xi|^{p-2}\xi,
    \qquad
    \Phi_p^{-1}=\Phi_{p'},
    \qquad
    p'=\frac{p}{p-1}.
\]
If $p>2$, then $\Phi_{p'}$ is not differentiable at the origin; if
$1<p<2$, its derivative there is degenerate. Thus, except for $p=2$, the
complementary equation cannot be recovered from an invertible linearization
of the principal part. This is not a difficulty with the averaged equation:
the obstruction lies in the complementary problem itself.
\medskip

The loss of linearization also changes the size of the response. In the
$p$-Laplacian case, the inverse $\Phi_{p'}$ is positively homogeneous of
degree $1/(p-1)$. A forcing of size $|\varepsilon|$ therefore produces a
complementary component of size $|\varepsilon|^{1/(p-1)}$. For $p>2$ this
is larger than the classical $O(|\varepsilon|)$ scale, so a first-order
linear expansion cannot capture the leading behavior.
\medskip

The contribution of this paper has two complementary parts. First, the
abstract Fredholm theory of \cite{CidMawhinZima2021} is contained in our
framework. When the principal operator is linear Fredholm, the usual inverse
on a complement of the kernel produces the complementary equation, and the
$C^1$ part of our main theorem recovers the corresponding local averaging
scheme. Second, the same reduction still works when the principal operator
is genuinely nonlinear and such a linear inverse is no longer available.
This allows us to treat quasilinear equations in an abstract Banach-space
setting, including $p$-Laplacian problems that lie outside both the linear
Fredholm framework and the differentiable setting of
\cite{MawhinNovoaIFT}. To the best of our knowledge, the abstract averaging
results cited above do not cover this quasilinear situation at this level of
generality.
\medskip

We consider
\begin{equation}\label{eq:main}
    Mx=\varepsilon N(x,\varepsilon),
    \qquad 0<|\varepsilon|\ll1,
\end{equation}
with decompositions
\[
    X=E\oplus X_2,\qquad
    Z=Z_1\oplus Z_2,\qquad
    E=\ker M.
\]
Instead of linearizing $M$ on $X_2$, we solve the complementary equation
directly. This leads to the exact local system
\[
    QN(a+v,\varepsilon)=0,
    \qquad
    v=R(a+v,\varepsilon),
    \qquad a\in E,\quad v\in X_2,
\]
where the first equation is the averaged compatibility condition and the
second determines the complementary component. Nonlinear complementary
inverses and quasilinear continuation methods have appeared before in
\cite{ManasevichMawhin1998,GeRen2004}; here they are used to build a local
averaging principle near a nondegenerate zero of the averaged map.
\medskip

The resulting theorem has three versions. Compactness gives existence through
Leray--Schauder degree. A small Lipschitz estimate gives local existence and
uniqueness by contraction. If the complementary problem is differentiable,
the implicit function theorem gives a unique local $C^1$ branch. Keeping
these three cases separate is useful: the Fredholm theory appears in the
smooth case, while genuinely nonlinear or nonsmooth inverses can still be
handled by compactness or Lipschitz estimates. The same reduction also gives
the leading asymptotic profile. In particular, a nonlinear inverse
homogeneous of degree $\gamma$ produces the natural scale
$|\varepsilon|^\gamma$ instead of the classical $O(\varepsilon)$ scale.
\medskip

The last section develops two applications. The first is a vector
$p$-Laplacian Duffing system with $p>2$. It gives multiple periodic
solutions and the scale $|\varepsilon|^{1/(p-1)}$, and it shows explicitly
how the complementary equation is solved when the inverse of the principal
operator is not differentiable at the origin. The second is an
infinite-dimensional problem in a Hilbert space. There the inverse is
nonsmooth but Lipschitz, and the small factor $|\varepsilon|$ turns the
reduced map into a contraction. Together, the examples show both why the
nonlinear formulation is needed and how the regularity of the complementary
problem determines the appropriate existence argument.

The paper is organized as follows. Section~\ref{sec:abstract} gives the
abstract reduction, the three local averaging principles and the asymptotic
profile. Section~\ref{sec:example} applies the theory to the vector
$p$-Laplacian Duffing system and to the Hilbert-space problem.

\section{A nonlinear averaging principle}\label{sec:abstract}

Let $X,Z$ be real Banach spaces and let $M:D(M)\subset X\to Z$ be
continuous. Assume
\begin{equation}\label{eq:splitting}
 E:=\ker M,\qquad X=E\oplus X_2,\qquad Z=Z_1\oplus Z_2,
 \qquad \operatorname{Im}M\subset Z_2,
\end{equation}
where all spaces in the decompositions are closed. Let $P:X\to E$ and
$Q:Z\to Z_1$ be the corresponding projections. Let $U\subset X$ be open
and let $N:\overline U\times[-\varepsilon_1,\varepsilon_1]\to Z$ be
continuous.

\begin{definition}[Complementary correction]\label{def:correction}
A continuous map
$R:\overline U\times[-\varepsilon_1,\varepsilon_1]\to X_2$ with
$R(x,0)=0$ is called a \emph{complementary correction} if, for every
$(x,\varepsilon)$ in its domain,
$\varepsilon(I-Q)N(x,\varepsilon)\in\operatorname{Im}M$ and
$Px+R(x,\varepsilon)\in D(M)$, and
\begin{equation}\label{eq:correction}
 M\bigl(Px+R(x,\varepsilon)\bigr)
 =\varepsilon(I-Q)N(x,\varepsilon).
\end{equation}
In addition, every solution $x$ of \eqref{eq:main} is required to satisfy
$R(x,\varepsilon)=(I-P)x$.
\end{definition}

\medskip

The map $R$ is not meant to be a global inverse of $M$. It only solves the
part of the equation that lies in the complementary space. This distinction
is important in nonlinear problems, where a global inverse may not exist or
may have very little regularity near the origin. A common situation is that
one has a nonlinear right inverse
\[
    \cR:Z_2\longrightarrow X_2,
    \qquad M\cR h=h,
\]
on the complementary space. Then
\begin{equation}\label{eq:typical-correction}
    R(x,\varepsilon)
    =\cR\bigl(\varepsilon(I-Q)N(x,\varepsilon)\bigr)
\end{equation}
solves exactly the part of the equation that is not detected by $Q$. Its
regularity is inherited from $\cR$ and $N$: it may be compact or Lipschitz
without being differentiable.
\medskip

A simple example makes this idea clear. Take $X=Z=\R^2$, write
$x=(a,v)$, and define
\[
    M(a,v)=(0,v^3).
\]
Then $E=\ker M=\R\times\{0\}$ and $X_2=\{0\}\times\R$; we may also take
$Z_1=\R\times\{0\}$ and $Z_2=\{0\}\times\R$. If $N=(N_1,N_2)$, then
$Mx=\varepsilon N(x,\varepsilon)$ becomes
\[
    N_1(a,v,\varepsilon)=0,
    \qquad
    v^3=\varepsilon N_2(a,v,\varepsilon).
\]
Hence the second equation is solved exactly by
\[
    R(x,\varepsilon)
    =\left(0,\sqrt[3]{\varepsilon N_2(x,\varepsilon)}\right).
\]
The derivative of $v\mapsto v^3$ at the origin is zero, so an invertible
linearization is impossible there, but the nonlinear equation itself is easy
to solve. If $N_2$ does not vanish at the limiting state, then
$v=O(|\varepsilon|^{1/3})$ rather than $O(|\varepsilon|)$. This elementary
example already shows the two features that will reappear for the
$p$-Laplacian: the complementary problem can be solved without linearizing
the principal operator, and the natural small-parameter scale may be
nonclassical.

\bigskip

Define the averaged map $F:E\cap U\to Z_1$ by $F(a)=QN(a,0)$. Fix
$a_0\in E\cap U$ and assume that $QN$ is $C^1$ near $(a_0,0)$ and
\begin{equation}\label{eq:ND}
 F(a_0)=0,\qquad
 F'(a_0)=D_x(QN)(a_0,0)|_E:E\to Z_1
 \quad\text{is an isomorphism}.
\end{equation}
A key point is that this differentiability assumption concerns the averaged
equation only. No derivative of the principal operator $M$ is required. This
is what allows the same reduction to remain meaningful when the
complementary problem is nonlinear or nonsmooth.

\begin{lemma}[Nonlinear reduction]\label{lem:reduction}
Assume that $QN$ is $C^1$ near $(a_0,0)$ and that \eqref{eq:ND} holds.
Let $R$ be a complementary correction and write $x=a+v$, with $a\in E$
and $v\in X_2$. For $\varepsilon\ne0$,
\begin{equation}\label{eq:reduction}
 Mx=\varepsilon N(x,\varepsilon)
 \quad\Longleftrightarrow\quad
 \begin{cases}
 QN(a+v,\varepsilon)=0,\\
 v=R(a+v,\varepsilon).
 \end{cases}
\end{equation}
Moreover, near $(a_0,0)$ there is a $C^1$ map $A(v,\varepsilon)$, with
$A(0,0)=a_0$, such that $QN(a+v,\varepsilon)=0$ if and only if
$a=A(v,\varepsilon)$. Hence local solutions are exactly the fixed points of
\begin{equation}\label{eq:T}
 T_\varepsilon(v):=R(A(v,\varepsilon)+v,\varepsilon).
\end{equation}
Set
$B_0:=F'(a_0)^{-1}D_x(QN)(a_0,0)|_{X_2}\in\mathcal L(X_2,E)$. Then
$D_vA(0,0)=-B_0$, and for every $\eta>0$, after shrinking the neighborhood,
\begin{equation}\label{eq:reduced-Lip}
 \norm{A(v_1,\varepsilon)-A(v_2,\varepsilon)+v_1-v_2}_X
 \le(1+\norm{B_0}+\eta)\norm{v_1-v_2}_X.
\end{equation}
\end{lemma}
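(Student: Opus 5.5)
The proof has three parts: the equivalence \eqref{eq:reduction}, the construction of $A$ by the implicit function theorem, and the Lipschitz estimate \eqref{eq:reduced-Lip} obtained from the mean value inequality.

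\emph{Equivalence.} Since $\operatorname{Im}M\subset Z_2=\ker Q$, applying $Q$ and $I-Q$ to \eqref{eq:main} splits it into
\[
\varepsilon QN(x,\varepsilon)=0,\qquad Mx=\varepsilon(I-Q)N(x,\varepsilon).
\]
Because $\varepsilon\neq0$, the first equation is $QN(a+v,\varepsilon)=0$.

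Suppose $x$ solves \eqref{eq:main}. Definition~\ref{def:correction} then gives $R(x,\varepsilon)=(I-P)x=v$.

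Conversely, suppose $QN(a+v,\varepsilon)=0$ and $v=R(a+v,\varepsilon)$, and set $x=a+v$. Then $Px=a$, and \eqref{eq:correction} gives
\[
Mx=M\bigl(Px+R(x,\varepsilon)\bigr)=\varepsilon(I-Q)N(x,\varepsilon).
\]
Adding $\varepsilon QN(x,\varepsilon)=0$ recovers $Mx=\varepsilon N(x,\varepsilon)$.

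\emph{Construction of $A$.} Define $G:E\times X_2\times\R\to Z_1$ near $(a_0,0,0)$ by $G(a,v,\varepsilon)=QN(a+v,\varepsilon)$. This map is $C^1$, $G(a_0,0,0)=F(a_0)=0$, and $D_aG(a_0,0,0)=F'(a_0)$ is an isomorphism by \eqref{eq:ND}. The implicit function theorem in Banach spaces yields neighborhoods and a unique $C^1$ map $A$ with $A(0,0)=a_0$ and the stated equivalence. Differentiating $G(A(v,\varepsilon),v,\varepsilon)=0$ in $v$ at $(0,0)$ gives
\[
F'(a_0)\,D_vA(0,0)+D_x(QN)(a_0,0)|_{X_2}=0,
\]
so $D_vA(0,0)=-B_0$. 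The characterization of local solutions as fixed points of $T_\varepsilon$ then follows by substituting $a=A(v,\varepsilon)$ into the second equation of \eqref{eq:reduction}.

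\emph{Lipschitz estimate.} This is the only mildly delicate point, and it uses the continuity of $D_vA$. Since $D_vA$ is continuous, for a given $\eta>0$ we can shrink to a neighborhood
\[
\|v\|<\rho,\qquad |\varepsilon|<\varepsilon_0,
\]
on which $\norm{D_vA(v,\varepsilon)+B_0}\le\eta$. We take the $v$-part of this neighborhood to be a ball, hence convex. The mean value inequality along the segment from $v_2$ to $v_1$ then gives
\[
\norm{A(v_1,\varepsilon)-A(v_2,\varepsilon)}\le(\norm{B_0}+\eta)\norm{v_1-v_2}.
\]
The triangle inequality adds $\norm{v_1-v_2}$, which yields \eqref{eq:reduced-Lip}.

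Norms on $E$ and $X_2$ are the ones inherited from $X$, so no equivalence constants enter the estimate.
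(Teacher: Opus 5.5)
Your proof is correct and follows the paper's argument step by step. You use the same $Q$/$(I-Q)$ splitting for the equivalence, the implicit function theorem applied to $(a,v,\varepsilon)\mapsto QN(a+v,\varepsilon)$, and differentiation of the resulting identity to get $D_vA(0,0)=-B_0$; the only difference is that you spell out the mean value inequality on a ball-shaped $v$-neighborhood, which the paper compresses into the remark that the $C^1$ regularity of $A$ yields \eqref{eq:reduced-Lip}.
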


\medskip

\begin{proof}
If $Mx=\varepsilon N(x,\varepsilon)$, then
$Mx\in\operatorname{Im}M\subset Z_2=\ker Q$. Applying $Q$ gives
$0=\varepsilon QN(x,\varepsilon)$, and therefore
$QN(x,\varepsilon)=0$ when $\varepsilon\ne0$. By
Definition~\ref{def:correction},
$v=(I-P)x=R(x,\varepsilon)$. Conversely, if the two reduced equations hold,
then \eqref{eq:correction} gives
$M(a+v)=\varepsilon(I-Q)N(a+v,\varepsilon)$, while $QN(a+v,\varepsilon)=0$.
Hence $M(a+v)=\varepsilon N(a+v,\varepsilon)$, proving
\eqref{eq:reduction}.
\medskip

Set $\Phi(a,v,\varepsilon)=QN(a+v,\varepsilon)$. Since
$D_a\Phi(a_0,0,0)=F'(a_0)$ is an isomorphism, the implicit function theorem
gives the map $A$ and therefore \eqref{eq:T}. Differentiating
$\Phi(A(v,\varepsilon),v,\varepsilon)=0$ at $(0,0)$ gives
$D_vA(0,0)=-B_0$. The $C^1$ regularity of $A$ then yields
\eqref{eq:reduced-Lip}.
\end{proof}

\medskip

The lemma separates the two parts of the problem. The implicit function
theorem solves the averaged equation for $a=A(v,\varepsilon)$, and the full
problem is then reduced to the single fixed-point equation
$v=T_\varepsilon(v)$ on $X_2$. The assumptions below describe three different
ways of solving this reduced equation.

We shall use, separately, the following three alternative local hypotheses
on $R$ near $(a_0,0)$.

\medskip
\noindent\textup{(C) \textbf{(Compactness)}}\quad
$R$ is completely continuous in a neighborhood of $(a_0,0)$.

\medskip
\noindent\textup{(L) \textbf{(Small Lipschitz condition)}}\quad
There exist a neighborhood $V$ of $a_0$ and
$\ell:[0,\varepsilon_1]\to[0,\infty)$ such that for $x,y \in V$, 
\begin{equation*}
 \norm{R(x,\varepsilon)-R(y,\varepsilon)}_X
 \le \ell(|\varepsilon|)\norm{x-y}_X,\quad \text{ and } \quad \limsup_{r\to0^+}\ell(r)<\frac{1}{1+\norm{B_0}}.
\end{equation*}

\medskip
\noindent\textup{(D) \textbf{(Differentiability)}}\quad
The map $(x,\varepsilon)\mapsto R(x,\varepsilon)$ is of class $C^1$
in a neighborhood of $(a_0,0)$ in $X\times\R$.

\begin{theorem}[Nonlinear averaging principle]\label{thm:main}
Assume that $QN$ is $C^1$ near $(a_0,0)$, that \eqref{eq:ND} holds, and
that $R$ is a complementary correction. Then, for every sufficiently small
$0<|\varepsilon|$, the following hold.
\medskip

\noindent\textup{(i)} Under \textup{(C)}, \eqref{eq:main} has a solution
$x_\varepsilon\to a_0$ in $X$.

\noindent\textup{(ii)} Under \textup{(L)}, \eqref{eq:main} has a unique
solution in a fixed sufficiently small neighborhood of $a_0$, and
$x_\varepsilon\to a_0$ in $X$.

\noindent\textup{(iii)} Under \textup{(D)}, \eqref{eq:main} has a unique
solution in a fixed sufficiently small neighborhood of $a_0$, and the map
$\varepsilon\mapsto x_\varepsilon$ extends to a local $C^1$ branch with
$x_0=a_0$.
\end{theorem}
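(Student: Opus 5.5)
All three parts go through Lemma~\ref{lem:reduction}. For $\varepsilon\ne0$, solutions of \eqref{eq:main} near $a_0$ are exactly the points $x=A(v,\varepsilon)+v$ with $v$ a fixed point of $T_\varepsilon$ from \eqref{eq:T}. The plan is to fix a closed ball $\overline{B}_\rho\subset X_2$ around $0$ and a range $|\varepsilon|\le\varepsilon_0$. These are chosen so that $A$ is defined and $C^1$ there, so that \eqref{eq:reduced-Lip} holds, and so that $A(v,\varepsilon)+v$ stays in the neighborhood of $a_0$ where the relevant hypothesis on $R$ applies. The key preliminary observation is that $R(x,0)=0$. Together with continuity of $R$ and of $A$, and compactness of $[-\varepsilon_0,\varepsilon_0]$, this will give $\sup_{v\in\overline B_\rho}\norm{T_\varepsilon(v)}\to0$ as $\varepsilon\to0$. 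We get this uniformity directly under (L) and (D). Under (C) we get it from complete continuity: $T_\varepsilon(v)$ with $(v,\varepsilon)\in\overline B_\rho\times[-\varepsilon_0,\varepsilon_0]$ ranges in a relatively compact set. A continuous function vanishing at $\varepsilon=0$ on a compact set tends to $0$ uniformly, and an argument by contradiction along sequences gives the claim. Hence $T_\varepsilon(\overline B_\rho)\subset\overline B_\rho$ for small $|\varepsilon|$. Any fixed point $v_\varepsilon$ then satisfies $\norm{v_\varepsilon}\le\sup\norm{T_\varepsilon}\to0$, so $x_\varepsilon=A(v_\varepsilon,\varepsilon)+v_\varepsilon\to A(0,0)=a_0$.

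For (i), $T_\varepsilon$ is the composition of the continuous map $v\mapsto A(v,\varepsilon)+v$, which is bounded on the ball, with the completely continuous map $R(\cdot,\varepsilon)$. So $T_\varepsilon$ is a compact self-map of the closed convex set $\overline B_\rho$, and Schauder's theorem gives a fixed point. Equivalently, one can use Leray--Schauder degree with the homotopy $sT_\varepsilon$, which has no fixed points on $\partial B_\rho$ once $\sup\norm{T_\varepsilon}<\rho$, and then $\deg(I-T_\varepsilon,B_\rho,0)=\deg(I,B_\rho,0)=1$. The main obstacle here is the uniform smallness statement above, and the compactness argument handles it.

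For (ii), combine (L) with \eqref{eq:reduced-Lip}:
\[
\norm{T_\varepsilon(v_1)-T_\varepsilon(v_2)}\le \ell(|\varepsilon|)(1+\norm{B_0}+\eta)\norm{v_1-v_2}.
\]
Choose $\eta>0$ and $\varepsilon_0$ so that $\limsup\ell<\kappa/(1+\norm{B_0}+\eta)$ for some $\kappa<1$. This is possible by the strict inequality in (L). Then $T_\varepsilon$ is a $\kappa$-contraction of $\overline B_\rho$ into itself, and Banach's theorem gives existence and uniqueness of the fixed point in the ball. To pass from uniqueness of $v$ to uniqueness of $x$ in a fixed neighborhood $W$ of $a_0$, note the following. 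If $x\in W$ solves \eqref{eq:main}, the lemma forces $Px=A((I-P)x,\varepsilon)$ with $(I-P)x\in B_\rho$, provided $W$ is chosen small enough, using continuity of $P$ and the local uniqueness in the implicit function theorem. So $x$ comes from a fixed point, and that fixed point is unique.

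For (iii), instead of a fixed-point theorem, apply the implicit function theorem to
\[
G(v,\varepsilon)=v-R(A(v,\varepsilon)+v,\varepsilon)
\]
on $X_2\times\R$. This map is $C^1$ by (D) and the lemma, and $G(0,0)=0$. Since $R(x,0)\equiv0$ near $a_0$, we have $D_xR(a_0,0)=0$, so $D_vG(0,0)=I_{X_2}$, which is invertible. The implicit function theorem then gives a unique $C^1$ map $\varepsilon\mapsto v_\varepsilon$ with $v_0=0$ solving $G=0$ locally. Setting $x_\varepsilon=A(v_\varepsilon,\varepsilon)+v_\varepsilon$ gives a $C^1$ branch with $x_0=a_0$. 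Local uniqueness of the solution $x$ near $a_0$ follows as in (ii). The point to check is that the unique zero set of $G$ near $(0,0)$ matches the solution set of \eqref{eq:main} for $\varepsilon\ne0$, and this is exactly \eqref{eq:reduction}.
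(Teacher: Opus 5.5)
\noindent There is a genuine gap in part (i). It sits in your claim that complete continuity gives $\sup_{v\in\overline B_\rho}\norm{T_\varepsilon(v)}_X\to0$ as $\varepsilon\to0$. Your Schauder self-map property, your Leray--Schauder boundary condition and your proof of $v_\varepsilon\to0$ all rest on this claim.

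When $X_2$ is infinite-dimensional, the set $\overline B_\rho\times[-\varepsilon_0,\varepsilon_0]$ is not compact. So the principle ``a continuous function vanishing at $\varepsilon=0$ on a compact set tends to $0$ uniformly'' does not apply, and compactness of the \emph{range} does not replace it. From $\varepsilon_k\to0$ and $v_k\in\overline B_\rho$ you can extract $T_{\varepsilon_k}(v_k)\to w$. To show $w=0$, however, you must use continuity of $R$ at some point $(x,0)$, which requires $A(v_k,\varepsilon_k)+v_k$ to converge. A bounded sequence $v_k$ need not have a convergent subsequence.

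The claim is in fact false under the hypotheses of (i). Take $X=Z=\R\times\ell^2$, $M(a,v)=(0,v)$, and $w\in\ell^2$ with $\norm{w}>\rho$. Let $\beta_n(v)=\max\{0,1-8\rho^{-1}\norm{v-\tfrac\rho2e_n}\}$; these have pairwise disjoint, locally finite supports. Let $g_n\in[0,1]$ be continuous, supported in $[\tfrac1{2n},\tfrac2n]$, with $g_n(\tfrac1n)=1$. Set $R((a,v),\varepsilon)=\bigl(0,\sum_ng_n(\varepsilon)\beta_n(v)w\bigr)$ and $N((a,v),\varepsilon)=\bigl(-a,\ \varepsilon^{-1}\sum_ng_n(\varepsilon)\beta_n(v)w\bigr)$. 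This $N$ is continuous, since the sum is locally finite and each $g_n$ vanishes near $0$. All hypotheses of (i) hold with $a_0=0$ and $A\equiv0$, and $R$ is bounded of rank one, hence completely continuous. Yet $T_{1/n}(\tfrac\rho2e_n)=w\notin\overline B_\rho$ for every $n$. The theorem itself survives, since $v=0$ is a fixed point, but your intermediate claim and the invariance of the ball fail.

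\medskip
\noindent What compactness does control is the set of homotopy fixed points. Suppose $\varepsilon_k\to0$, $\norm{v_k}_X=\rho$, $\theta_k\in[0,1]$ and $v_k=\theta_kT_{\varepsilon_k}(v_k)$. Then $v_k$ \emph{itself} lies in the relatively compact set $[0,1]\cdot\{T_\varepsilon(v):\norm{v}_X\le\rho,\ |\varepsilon|\le\varepsilon_0\}$. Along a subsequence $v_k\to v$ and $\theta_k\to\theta$, and continuity of $A$ and $R$ gives $v=\theta R(A(v,0)+v,0)=0$, contradicting $\norm{v}_X=\rho$. This yields $v\ne\theta T_\varepsilon(v)$ on $\partial B_\rho$ for small $|\varepsilon|$, hence $\deg_{LS}[I-T_\varepsilon,B_\rho,0]=1$. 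The same argument with $\theta_k=1$ shows that the fixed points $v_\varepsilon$ form a relatively compact family whose only cluster point is $0$, so $v_\varepsilon\to0$. This is the paper's argument, and you should drop the Schauder version.

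In (ii) you also appeal to the uniform smallness, which (L) does not give unless $\ell(r)\to0$. There the repair is immediate: $\norm{T_\varepsilon(v)}_X\le\norm{T_\varepsilon(0)}_X+\kappa\norm{v}_X$ with $T_\varepsilon(0)\to R(a_0,0)=0$ gives invariance of $\overline B_\rho$ and $\norm{v_\varepsilon}_X\le\norm{T_\varepsilon(0)}_X/(1-\kappa)\to0$, as in the paper.

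Part (iii) is fine. Applying the implicit function theorem to the reduced map $G(v,\varepsilon)$, rather than to the paper's $\mathcal G(a,v,\varepsilon)$, is an equivalent variant. Your explicit argument for local uniqueness of $x$, not just of $v$, is a welcome addition.
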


\medskip

\begin{proof}
By Lemma~\ref{lem:reduction}, local solutions are exactly the fixed points
of $T_\varepsilon$ in $X_2$.

For \textup{(i)}, choose $r>0$ so that $A$ is defined on
$\overline B_r^{X_2}(0)\times[-\varepsilon_*,\varepsilon_*]$. Since $R$ is
completely continuous, $T_\varepsilon$ is compact. After decreasing
$\varepsilon_*$ if necessary,
\begin{equation}\label{eq:no-boundary}
 v\ne\theta T_\varepsilon(v),\qquad
 \norm v_X=r,\quad 0\le\theta\le1.
\end{equation}
Indeed, otherwise there would exist $\varepsilon_k\to0$,
$\norm{v_k}_X=r$ and $\theta_k\in[0,1]$ with
$v_k=\theta_kT_{\varepsilon_k}(v_k)$. Complete continuity gives, along a
subsequence, $v_k\to v$ and $\theta_k\to\theta$; passing to the limit and
using $R(x,0)=0$ yields $v=0$, contradicting $\norm v_X=r$. Hence
\[
 \deg_{LS}[I-T_\varepsilon,B_r,0]
 =\deg_{LS}[I,B_r,0]=1,
\]
so a fixed point $v_\varepsilon$ exists. The same compactness argument
shows that any sequence of such fixed points with $\varepsilon\to0$ is
relatively compact and has only the cluster point $0$. Thus
$v_\varepsilon\to0$ and
$x_\varepsilon=A(v_\varepsilon,\varepsilon)+v_\varepsilon\to a_0$.
\medskip

For \textup{(ii)}, \eqref{eq:reduced-Lip} and \textup{(L)} allow us to
choose $q\in(0,1)$ such that, for small nonzero $\varepsilon$,
\[
 \norm{T_\varepsilon(v_1)-T_\varepsilon(v_2)}_X
 \le q\norm{v_1-v_2}_X.
\]
Moreover,
$T_\varepsilon(0)=R(A(0,\varepsilon),\varepsilon)\to R(a_0,0)=0$, so a
fixed sufficiently small closed ball is invariant under $T_\varepsilon$.
Banach's theorem yields a unique fixed point, with
\[
 \norm{v_\varepsilon}_X
 \le\frac{\norm{T_\varepsilon(0)}_X}{1-q}\longrightarrow0.
\]
The exact reduction gives existence, local uniqueness and
$x_\varepsilon\to a_0$ for \eqref{eq:main}.
\medskip

For \textup{(iii)}, set
\[
 \mathcal G(a,v,\varepsilon)
 :=\bigl(QN(a+v,\varepsilon),\,v-R(a+v,\varepsilon)\bigr).
\]
Since $R(x,0)=0$, one has $D_xR(a_0,0)=0$, and therefore
\[
 D_{(a,v)}\mathcal G(a_0,0,0)
 =
 \begin{pmatrix}
 F'(a_0) & D_x(QN)(a_0,0)|_{X_2}\\
 0 & I
 \end{pmatrix}.
\]
This operator is an isomorphism because $F'(a_0)$ is. The implicit function
theorem yields the unique local $C^1$ branch described in \textup{(iii)}.
\end{proof}

\medskip

\begin{remark}[The linear Fredholm case]\label{rem:linear-case}
The previous abstract theory is recovered directly when $M=L$ is linear and
Fredholm of index zero. Choose
\[
    X=\ker L\oplus X_2,
    \qquad
    Z=Z_1\oplus \operatorname{Im}L,
\]
and let $K:\operatorname{Im}L\longrightarrow X_2$ be the inverse of $L|_{X_2}$. Then
\begin{equation}\label{eq:R-linear}
    R(x,\varepsilon)
    =\varepsilon K(I-Q)N(x,\varepsilon).
\end{equation}
If $N$ is $C^1$, then $R$ is $C^1$, and
Theorem~\ref{thm:main}(iii) gives exactly the usual local Fredholm reduction:
the kernel component is selected by the averaged equation and the
complementary component is obtained through $K$.

\medskip

Thus the abstract averaging
scheme of \cite{CidMawhinZima2021} is contained in the differentiable case of
Theorem~\ref{thm:main}. The new point is that the formula
\eqref{eq:R-linear} is not essential: when $M$ is nonlinear, $K$ may be
replaced by a nonlinear map $R$, while the nondegeneracy assumption is still
placed only on the averaged map $F$.
\end{remark}

The three alternatives in Theorem~\ref{thm:main} correspond to three
different levels of regularity of the complementary problem. Compactness is
enough for existence. A small Lipschitz constant turns the reduced equation
into a contraction and gives local uniqueness. Differentiability is only
needed when one wants a smooth local branch. This separation is useful in
quasilinear problems, where the natural inverse may be continuous or
Lipschitz without being $C^1$.
\medskip

Distinct nondegenerate zeros of $F$ yield distinct local responses by
choosing disjoint neighborhoods. A convenient sufficient condition for
\textup{(L)} is $\ell(r)\to0$, which occurs, for example, when
$R(x,\varepsilon)=\cR(\varepsilon(I-Q)N(x,\varepsilon))$ and the relevant
maps are locally Lipschitz. Notice that the nondegeneracy condition is imposed
only on the averaged map $F$; no invertibility of a derivative of $M$ is
required.

\begin{corollary}[Asymptotic profile]\label{cor:profile}
Assume that $QN$ is $C^1$ near $(a_0,0)$, that \eqref{eq:ND} holds, and
that $R$ is a complementary correction. Let
$x_\varepsilon\to a_0$ be a local solution family for
$\varepsilon\to0^+$, and suppose that, locally uniformly,
\begin{equation}\label{eq:scaling}
 \varepsilon^{-\gamma}R(x,\varepsilon)\longrightarrow R_0(x),
 \qquad 0<\gamma\le1.
\end{equation}
Put $w_0=R_0(a_0)$ and $D=D_x(QN)(a_0,0)$. Then
\begin{equation}\label{eq:profile}
 \frac{x_\varepsilon-a_0}{\varepsilon^\gamma}\longrightarrow
 \begin{cases}
 w_0-F'(a_0)^{-1}Dw_0, & 0<\gamma<1,\\[1mm]
 w_0-F'(a_0)^{-1}\bigl(Dw_0+\partial_\varepsilon(QN)(a_0,0)\bigr),
 & \gamma=1.
 \end{cases}
\end{equation}
\end{corollary}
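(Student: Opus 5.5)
Write $x_\varepsilon=a_\varepsilon+v_\varepsilon$ with $a_\varepsilon=Px_\varepsilon\in E$ and $v_\varepsilon=(I-P)x_\varepsilon\in X_2$. By Lemma~\ref{lem:reduction}, since $x_\varepsilon\to a_0$ and $\varepsilon\ne0$, for small $\varepsilon$ we have
\[
a_\varepsilon=A(v_\varepsilon,\varepsilon),\qquad v_\varepsilon=R(x_\varepsilon,\varepsilon).
\]
The plan is to treat the two components separately.

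First, the complementary component. Write
\[
\varepsilon^{-\gamma}v_\varepsilon
=\bigl(\varepsilon^{-\gamma}R(x_\varepsilon,\varepsilon)-R_0(x_\varepsilon)\bigr)+R_0(x_\varepsilon).
\]
The first bracket tends to zero by the local uniform convergence \eqref{eq:scaling}, since $x_\varepsilon$ eventually lies in a fixed neighborhood of $a_0$. For the second term we need $R_0(x_\varepsilon)\to R_0(a_0)=w_0$. This follows because $R_0$ is continuous near $a_0$: it is a locally uniform limit of the continuous maps $x\mapsto\varepsilon^{-\gamma}R(x,\varepsilon)$. Hence
\[
\varepsilon^{-\gamma}v_\varepsilon\to w_0,
\]
and in particular $\|v_\varepsilon\|=O(\varepsilon^\gamma)$.

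Second, the kernel component. Since $A$ is $C^1$ near $(0,0)$, with $A(0,0)=a_0$ and $D_vA(0,0)=-B_0=-F'(a_0)^{-1}D|_{X_2}$, Taylor's formula gives
\[
a_\varepsilon-a_0=-B_0v_\varepsilon+\partial_\varepsilon A(0,0)\,\varepsilon+o(\|v_\varepsilon\|+|\varepsilon|).
\]
Differentiating $QN(A(v,\varepsilon)+v,\varepsilon)=0$ in $\varepsilon$ at $(0,0)$ gives
\[
F'(a_0)\,\partial_\varepsilon A(0,0)+\partial_\varepsilon(QN)(a_0,0)=0,
\qquad\text{so}\qquad
\partial_\varepsilon A(0,0)=-F'(a_0)^{-1}\partial_\varepsilon(QN)(a_0,0).
\]
Now divide by $\varepsilon^\gamma$. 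Since $\|v_\varepsilon\|=O(\varepsilon^\gamma)$ and $|\varepsilon|\le\varepsilon^\gamma$ for $\gamma\le1$ and small $\varepsilon>0$, the remainder is $o(\varepsilon^\gamma)$. The term $-B_0v_\varepsilon/\varepsilon^\gamma$ tends to $-B_0w_0=-F'(a_0)^{-1}Dw_0$, using that $w_0\in X_2$ because $R$ takes values in $X_2$. The term $\varepsilon^{1-\gamma}\partial_\varepsilon A(0,0)$ tends to $0$ when $\gamma<1$ and to $-F'(a_0)^{-1}\partial_\varepsilon(QN)(a_0,0)$ when $\gamma=1$.

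Adding the two components yields \eqref{eq:profile}.

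The main delicate point is the control of the Taylor remainder at scale $\varepsilon^\gamma$. This requires the a priori bound $\|v_\varepsilon\|=O(\varepsilon^\gamma)$, which comes from the first step, together with the comparison $\varepsilon=O(\varepsilon^\gamma)$; the latter is exactly where the hypothesis $\gamma\le1$ is used. A lesser point is the continuity of $R_0$ at $a_0$, which is supplied by the locally uniform convergence in \eqref{eq:scaling}.
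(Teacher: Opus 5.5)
Your proposal is correct and follows essentially the same route as the paper. You write $x_\varepsilon=A(v_\varepsilon,\varepsilon)+v_\varepsilon$, obtain $v_\varepsilon/\varepsilon^\gamma\to w_0$ from $v_\varepsilon=R(x_\varepsilon,\varepsilon)$ and \eqref{eq:scaling}, and expand $A$ to first order using $D_vA(0,0)=-B_0$ and $\partial_\varepsilon A(0,0)=-F'(a_0)^{-1}\partial_\varepsilon(QN)(a_0,0)$; you also make explicit two points the paper leaves tacit, namely the continuity of $R_0$ at $a_0$ (as a locally uniform limit of continuous maps) and that the Taylor remainder is $o(\varepsilon^\gamma)$ because $\|v_\varepsilon\|+\varepsilon=O(\varepsilon^\gamma)$.
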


\begin{proof}
From \eqref{eq:T} and \eqref{eq:scaling},
$v_\varepsilon/\varepsilon^\gamma\to w_0$. Moreover,
\[
 D_vA(0,0)=-F'(a_0)^{-1}D|_{X_2},\qquad
 \partial_\varepsilon A(0,0)
 =-F'(a_0)^{-1}\partial_\varepsilon(QN)(a_0,0).
\]
The first-order expansion of $A(v_\varepsilon,\varepsilon)$ gives
\eqref{eq:profile}; for $\gamma<1$, $\varepsilon=o(\varepsilon^\gamma)$.
\end{proof}

The corollary gives more than the convergence $x_\varepsilon\to a_0$.
If the complementary inverse is positively homogeneous of degree $\gamma$,
then the first displacement is naturally of order
$O(\varepsilon^\gamma)$. For a nonquadratic principal part this exponent need
not be one. The $p$-Laplacian example below shows this explicitly, with
$\gamma=1/(p-1)$.

\section{Applications}\label{sec:example}

\subsection{A $p$-Laplacian Duffing system}
\medskip

We first consider a genuinely quasilinear problem. It is a natural test for
the abstract theorem because the complementary equation can be solved, while
its inverse is not differentiable at the origin. Let $p>2$ and let
\[
    \Phi_p(\xi)=|\xi|^{p-2}\xi,\qquad \xi\in\R^n,
\]
and let $A=(a_{jk})\in\R^{n\times n}$. Consider
\begin{equation}\label{eq:duffing}
 \begin{cases}
 (\Phi_p(u'))'_j+\varepsilon\!\left[\displaystyle\sum_{k=1}^n
 a_{jk}u'_k+b_ju_j^3+c_ju_j^2+d_ju_j\right]=\varepsilon e_j(t),
 & 1\le j\le n,\\
 u(0)=u(T),\qquad u'(0)=u'(T),
 \end{cases}
\end{equation}
where $e\in C_T(\R^n)$, $b_j,c_j,d_j\in\R$, and $c_j=0$ whenever
$b_j=0$. Put
\[
    \bar e=\frac1T\int_0^T e(t)\,dt,
    \qquad
    g_j(s)=b_js^3+c_js^2+d_js.
\]
Equation~\eqref{eq:duffing} describes a periodically forced system of
nonlinear oscillators with a quasilinear $p$-Laplacian principal part. The
matrix $A$ couples the components through their velocities, while the
functions $g_j$ describe the restoring forces acting on each component.
Thus the example retains the familiar structure of a Duffing-type oscillator
while allowing both nonlinear inertia and velocity coupling. The condition
$c_j=0$ whenever $b_j=0$ simply ensures that, when the cubic term is absent,
the restoring force is linear.
\medskip

Take $X=C_T^1(\R^n)$, $Z=C_T(\R^n)$,
\[
    Mu=(\Phi_p(u'))',
    \qquad
    N_j(u)=e_j-\sum_{k=1}^n a_{jk}u'_k-g_j(u_j),
\]
with the natural domain of $M$. Then $E=\ker M=\R^n$, identified with the
constant functions. We choose $Pu=\bar u$ and $Qh=\bar h$, so that $X_2$ and
$Z_2$ are the zero-mean subspaces.

We now describe the complementary inverse, since this is where the
quasilinear structure enters the argument.
For $h\in Z_2$, set
\[
    H_h(t)=\int_0^t h(s)\,ds.
\]
Since $\bar h=0$, the function $H_h$ is $T$-periodic. Solving
$M v=h$ gives after one integration
\[
    \Phi_p(v'(t))=H_h(t)+c
\]
for some $c\in\R^n$. Since $\Phi_p^{-1}=\Phi_{p'}$, where
$p'=p/(p-1)$, the periodicity condition for $v$ becomes
\begin{equation}\label{eq:cp}
    \int_0^T \Phi_{p'}(H_h(t)+c)\,dt=0.
\end{equation}
The map
$ c\mapsto\int_0^T \Phi_{p'}(H_h(t)+c)\,dt$ is continuous, strictly
monotone and coercive, and therefore has a unique zero $c=c(h)$ in
\eqref{eq:cp}. Defining
\[
    (\mathcal Kh)'(t)=\Phi_{p'}(H_h(t)+c(h)),
    \qquad \overline{\mathcal Kh}=0,
\]
yields the standard periodic inverse
$\mathcal K:Z_2\to X_2$ used in
\cite[Section~2]{ManasevichMawhin1998}. It is continuous and compact, and
satisfies $M\mathcal Kh=h$. Consequently,
\begin{equation}\label{eq:R-duffing}
    R(u,\varepsilon)
    =\mathcal K\bigl(\varepsilon(I-Q)N(u)\bigr)
\end{equation}
is a complementary correction satisfying \textup{(C)}.

Here the reason for using the nonlinear theorem is explicit. Since $p>2$,
we have $1<p'<2$, so $\Phi_{p'}$ is not differentiable at the origin. There
is therefore no invertible linearization that can replace $\mathcal K$ near
zero. Continuity and compactness are nevertheless enough, and
Theorem~\ref{thm:main}(i) applies directly.
\medskip

For $\alpha\in E$, the velocity terms vanish after averaging, and hence
\[
    F_j(\alpha)=\bar e_j-g_j(\alpha_j).
\]
The averaged equation therefore decouples into the scalar algebraic
conditions
\begin{equation}\label{eq:BE}
 b_j\alpha_j^3+c_j\alpha_j^2+d_j\alpha_j-\bar e_j=0,
 \qquad 1\le j\le n.
\end{equation}
Moreover,
\[
 F'(\alpha)=-\operatorname{diag}
 \bigl(g_1'(\alpha_1),\ldots,g_n'(\alpha_n)\bigr).
\]
Thus $\alpha$ is nondegenerate exactly when each chosen scalar root is
simple. In particular, if the $j$th equation in \eqref{eq:BE} has
$m_j\in\{1,2,3\}$ simple real roots, then the averaged map has
$m_1\cdots m_n$ distinct nondegenerate zeros.
\medskip

The same inverse also determines the size of the oscillatory correction.
The homogeneity of $\Phi_{p'}$ implies
\begin{equation}\label{eq:Khom}
 \mathcal K(\lambda h)
 =\operatorname{sgn}(\lambda)|\lambda|^{1/(p-1)}\mathcal Kh.
\end{equation}
For a zero $\alpha$ of $F$,
\[
    (I-Q)N(\alpha)=e-\bar e.
\]
Furthermore, $\mathcal K(e-\bar e)$ has zero mean and is periodic, so the
average of both its derivative and its linearized restoring contribution is
zero; consequently
\[
    D_x(QN)(\alpha)\mathcal K(e-\bar e)=0.
\]
Since $\gamma=1/(p-1)\in(0,1)$, Corollary~\ref{cor:profile} and
\eqref{eq:Khom} give the leading asymptotic profile stated in
Proposition~\ref{prop:duffing-application} below. In particular, the first
oscillatory term is determined only by the zero-mean part of the forcing.
The velocity-coupling and restoring terms select the averaged state and enter
at higher order, while the nonlinear inverse of the $p$-Laplacian determines
the leading nonconstant profile.

\bigskip

The conclusions of the example can now be stated explicitly.

\medskip
\begin{proposition}[Vector $p$-Laplacian Duffing system]\label{prop:duffing-application}
Assume $p>2$ and the hypotheses above. If the $j$th equation in
\eqref{eq:BE} has $m_j\in\{1,2,3\}$ simple real roots, then, for all
sufficiently small $0<|\varepsilon|$, problem \eqref{eq:duffing} has at
least
\[
    m_1\cdots m_n
\]
distinct $T$-periodic solutions. Each of them converges in
$C_T^1(\R^n)$ to the corresponding constant zero $\alpha$ of the averaged
equation.

Moreover, for every such zero $\alpha$, as $\varepsilon\to0^+$,
\[
    u_\varepsilon
    =\alpha+\varepsilon^{1/(p-1)}
      \mathcal K(e-\bar e)
      +o\!\left(\varepsilon^{1/(p-1)}\right)
      \quad\text{in }C_T^1(\R^n).
\]
Hence the leading displacement has the genuinely nonlinear scale
$\varepsilon^{1/(p-1)}$.
\end{proposition}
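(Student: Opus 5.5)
We apply Theorem~\ref{thm:main}(i) separately at each nondegenerate zero $\alpha$ of $F$, and then Corollary~\ref{cor:profile} with $\gamma=1/(p-1)$.

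\textbf{Setting up the hypotheses.} The data are $X=C^1_T(\R^n)$, $Z=C_T(\R^n)$, $E=\R^n$ (constants), $X_2,Z_2$ the zero-mean subspaces, and $Pu=\bar u$, $Qh=\bar h$. We first check \eqref{eq:splitting}. Integrating $(\Phi_p(u'))'$ over a period gives zero, so $\operatorname{Im}M\subset Z_2$. Moreover $Mu=0$ forces $\Phi_p(u')$ to be constant; hence $u'$ is constant, and periodicity gives $u'=0$. Thus $\ker M=\R^n$.

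\textbf{The complementary correction.} For $R$ as in \eqref{eq:R-duffing}, the following hold:
\begin{itemize}
\item $\varepsilon(I-Q)N\in Z_2=\operatorname{Im}M$, since $M\mathcal K h=h$.
\item $R(u,0)=\mathcal K(0)=0$, because $c(0)=0$ by uniqueness.
\item $M(Pu+R)=M\mathcal K(\cdot)=\varepsilon(I-Q)N$, since $M$ annihilates constants.
\end{itemize}
It remains to show that a solution $u$ of \eqref{eq:main} satisfies $R(u,\varepsilon)=(I-P)u$. Applying $Q$ gives $QN(u)=0$. Then $w=u-\bar u\in X_2$ solves $Mw=h:=\varepsilon(I-Q)N(u)$. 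Integrating once, $\Phi_p(w')=H_h+c$, and periodicity of $w$ forces $c=c(h)$ by uniqueness of the zero of \eqref{eq:cp}. Together with $\bar w=0$ this gives $w=\mathcal Kh$.

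\textbf{Regularity and nondegeneracy.} $N$ is polynomial in $(u,u')$, so $N:X\to Z$ is $C^1$ and maps bounded sets to bounded sets. Compactness of $\mathcal K$ (quoted from \cite{ManasevichMawhin1998}) then makes $R$ completely continuous, which is (C). $QN$ does not depend on $\varepsilon$ and is $C^1$. By the computation of $F'$ above, \eqref{eq:ND} holds exactly when each $\alpha_j$ is a simple root, i.e.\ $g_j'(\alpha_j)\ne0$. If $b_j=0$, then $c_j=0$ and $g_j$ is linear, so "simple root" just means $d_j\ne0$; this is consistent.

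\textbf{Existence and distinctness.} Theorem~\ref{thm:main}(i) yields a solution $u_\varepsilon\to\alpha$ in $C^1_T$ for each of the $m_1\cdots m_n$ zeros. These zeros are distinct points of $\R^n$, so we choose disjoint balls around them in $X$. For small $\varepsilon$ each $u_\varepsilon$ lies in its own ball, so the solutions are distinct.

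\textbf{Asymptotics.} We verify \eqref{eq:scaling} with $R_0(u)=\mathcal K((I-Q)N(u))$. By \eqref{eq:Khom}, for $\varepsilon>0$ we have $\varepsilon^{-\gamma}R(u,\varepsilon)=\mathcal K((I-Q)N(u))$ exactly, since $N$ does not depend on $\varepsilon$. So the convergence is trivial and $R_0$ is continuous.

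Corollary~\ref{cor:profile} with $0<\gamma<1$ then gives the limit $w_0-F'(\alpha)^{-1}Dw_0$, where $w_0=\mathcal K(e-\bar e)$ because $(I-Q)N(\alpha)=e-\bar e$. The remaining point is $Dw_0=0$. We compute
$$D_x(QN)(\alpha)w=-\overline{Aw'}-\bigl(g_j'(\alpha_j)\bar w_j\bigr)_j.$$
For $w=w_0\in X_2$ this vanishes: $\overline{w_0'}=0$ by periodicity and $\bar w_0=0$ by construction. This yields the stated expansion in $C^1_T$.

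\textbf{Main obstacle.} The delicate step is the abstract requirement that every solution satisfies $R=(I-P)u$. It rests on the uniqueness of $c(h)$, i.e.\ on the strict monotonicity of $c\mapsto\int_0^T\Phi_{p'}(H_h+c)\,dt$. In the vector case this map is a strictly monotone operator on $\R^n$, since $\Phi_{p'}$ is the gradient of the strictly convex function $|\cdot|^{p'}/p'$, so uniqueness holds. We also rely on the quoted continuity and compactness of $\mathcal K$ on $Z_2$ with values in $C^1$. This follows by Arzelà–Ascoli, because $H_h$ is bounded in $C^1$ for bounded $h$, together with continuity of $h\mapsto c(h)$.
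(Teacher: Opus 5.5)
Your proposal is correct and follows the paper's own route. Multiplicity comes from Theorem~\ref{thm:main}(i) applied in disjoint neighborhoods of the $m_1\cdots m_n$ nondegenerate zeros, and the profile comes from Corollary~\ref{cor:profile} with $\gamma=1/(p-1)$, the exact homogeneity \eqref{eq:Khom}, and $D_x(QN)(\alpha)\mathcal K(e-\bar e)=0$; the extra checks you supply (the correction axioms via uniqueness of $c(h)$, complete continuity of $R$, and the computation of $Dw_0$) are the details the paper leaves to the discussion before the proposition and to \cite{ManasevichMawhin1998}.
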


\begin{proof}
The multiplicity follows from Theorem~\ref{thm:main}(i), applied in disjoint
neighborhoods of the $m_1\cdots m_n$ nondegenerate zeros of $F$.
The asymptotic formula follows from Corollary~\ref{cor:profile} and
\eqref{eq:Khom}.
\end{proof}

\medskip

This proposition shows in a concrete way what is gained from the nonlinear
framework: the averaging reduction gives both multiplicity and the leading
scale even though the complementary inverse is not differentiable at the
origin.

\subsection{A nonsmooth equation in a Hilbert space}
\medskip

The second example shows a different regime. Here the nonlinear inverse is
not differentiable, but it is Lipschitz, and this is enough for local
uniqueness. Consider the $2\pi$-periodic problem
\begin{equation}\label{eq:Hilbert}
 (\Phi(u'))'+\varepsilon u=\varepsilon e(t),
 \qquad u(0)=u(2\pi),\quad u'(0)=u'(2\pi),
\end{equation}
with $e\in C_{2\pi}(H)$, $H=L^2(0,1)$, $0<\lambda<1$, and
$\Phi:H\to H$ the Nemytskii operator induced by
\[
    \rho(s)=s+\lambda|s|.
\]
Thus $(\Phi v)(x)=\rho(v(x))$. The scalar map $\rho$ is strictly increasing
and bi-Lipschitz, but it has a corner at the origin. Hence the natural inverse
is Lipschitz rather than $C^1$, exactly the situation covered by
Theorem~\ref{thm:main}(ii).

The inverse is induced by
\[
 \psi(s)=
 \begin{cases}
 s/(1+\lambda),&s\ge0,\\
 s/(1-\lambda),&s<0,
 \end{cases}
\]
and, with $\Psi=\Phi^{-1}$, one has
\begin{equation}\label{eq:H-Lip}
 \langle\Psi(\xi)-\Psi(\eta),\xi-\eta\rangle_H
 \ge\frac{1}{1+\lambda}\|\xi-\eta\|_H^2,
 \qquad
 \|\Psi(\xi)-\Psi(\eta)\|_H
 \le\frac{1}{1-\lambda}\|\xi-\eta\|_H.
\end{equation}
The first estimate gives strong monotonicity and therefore uniqueness in the
auxiliary equation below. The second gives the Lipschitz estimate needed to
obtain a contraction once the factor $\varepsilon$ is inserted.

Set $X=C_{2\pi}^1(H)$, $Z=C_{2\pi}(H)$,
$Mu=(\Phi(u'))'$ and $N(u)=e-u$, with $P$ and $Q$ given by averages.
For $h\in Z_2$, let
\[
    H_h(t)=\int_0^t h(s)\,ds.
\]
As in the finite-dimensional $\phi$-Laplacian construction, a solution of
$Mv=h$ must satisfy
\[
    v'(t)=\Psi(H_h(t)+c)
\]
for a constant $c\in H$. The periodicity condition is therefore
\begin{equation}\label{eq:c-H}
    \int_0^{2\pi}\Psi(H_h(t)+c)\,dt=0.
\end{equation}
The operator in the left-hand side of \eqref{eq:c-H} is continuous,
strongly monotone and coercive on $H$; hence the Browder--Minty theorem
\cite{Zeidler1990} gives a unique solution $c=c(h)$. Define
\[
    (\mathcal Kh)'=\Psi(H_h+c(h)),
    \qquad \overline{\mathcal Kh}=0.
\]
Then $M\mathcal Kh=h$. Moreover, applying the two estimates in
\eqref{eq:H-Lip} to the equations defining $c(h_1)$ and $c(h_2)$ gives
Lipschitz dependence of $c(h)$ on $h$. Together with the integral definition
of $H_h$, this yields
\begin{equation}\label{eq:K-H-Lip}
 \|\mathcal Kh_1-\mathcal Kh_2\|_{C^1}
 \le C_K\|h_1-h_2\|_\infty.
\end{equation}

Consequently,
\[
 R(u,\varepsilon)
 =\mathcal K\bigl(\varepsilon(I-Q)N(u)\bigr)
\]
satisfies
\[
 \|R(u,\varepsilon)-R(v,\varepsilon)\|_{C^1}
 \le C|\varepsilon|\,\|u-v\|_{C^1}.
\]
Thus the Lipschitz modulus in \textup{(L)} tends to zero with
$|\varepsilon|$. On the kernel, identified with the constant functions,
\[
    F(a)=\bar e-a,
    \qquad F'(\bar e)=-I_H.
\]
Furthermore $D_x(QN)(\bar e)|_{X_2}=0$, because $X_2$ consists of zero-mean
functions, and therefore $B_0=0$.

\bigskip

We therefore obtain the following conclusion.

\medskip
\begin{proposition}\label{prop:hilbert-application}
For every sufficiently small $0<|\varepsilon|$, problem \eqref{eq:Hilbert}
has a unique $2\pi$-periodic solution in a fixed neighborhood of $\bar e$ in
$C_{2\pi}^1(H)$. Moreover,
\[
    u_\varepsilon\longrightarrow \bar e
    \qquad\text{in }C_{2\pi}^1(H)
    \quad\text{as }\varepsilon\to0.
\]
\end{proposition}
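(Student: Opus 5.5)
The plan is to apply Theorem~\ref{thm:main}(ii) at $a_0=\bar e$. The work lies in checking its hypotheses: \eqref{eq:ND}, that $R$ is a complementary correction, and condition (L). Convergence $u_\varepsilon\to\bar e$ in $C^1_{2\pi}(H)$ and local uniqueness then come directly from the theorem.

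\textbf{Nondegeneracy.} Since $N(u)=e-u$ is affine and continuous from $X$ to $Z$, the map $QN(u,\varepsilon)=\bar e-\bar u$ is $C^1$ (indeed affine). On constants $a\in H$ we have $F(a)=\bar e-a$. Hence $F(\bar e)=0$ and $F'(\bar e)=-I_H$, which is an isomorphism of $E\cong H$ onto $Z_1\cong H$. Moreover $B_0=0$, because $D_x(QN)v=-\bar v=0$ for zero-mean $v$. The threshold in (L) is therefore simply $\limsup\ell<1$.

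\textbf{Complementary correction.} For $R(u,\varepsilon)=\mathcal K(\varepsilon(I-Q)N(u))$, the first point is $R(u,0)=\mathcal K(0)=0$. This holds because $c(0)=0$ solves \eqref{eq:c-H} when $h=0$, and $c(0)$ is unique. The identity $M(Pu+R(u,\varepsilon))=\varepsilon(I-Q)N(u)$ holds because $M$ annihilates constants through the derivative: $(Pu+R)'=R'$. Then $M\mathcal K h=h$ applies with $h=\varepsilon(I-Q)N(u)\in Z_2$, and this element lies in $\operatorname{Im}M$.

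The remaining requirement is that every solution satisfies $R(u,\varepsilon)=(I-P)u$. Suppose $u$ solves \eqref{eq:Hilbert}. Set $w=(I-P)u$ and $h=\varepsilon(I-Q)N(u)$. Applying $Q$ shows that $QN(u)=0$ when $\varepsilon\ne0$, so $Mw=Mu=\varepsilon N(u)=h$. Integrating once gives $\Phi(w')=H_h+c$ for some $c\in H$, hence $w'=\Psi(H_h+c)$. Periodicity of $w$ forces $c$ to satisfy \eqref{eq:c-H}, so $c=c(h)$ by the uniqueness from strong monotonicity (the first estimate in \eqref{eq:H-Lip}). Since $\bar w=0$, it follows that $w=\mathcal Kh$. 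Continuity of $R$ follows from \eqref{eq:K-H-Lip}.

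\textbf{Condition (L).} Combining \eqref{eq:K-H-Lip} with $\|(I-Q)(N(u)-N(v))\|_\infty\le 2\|u-v\|_\infty\le 2\|u-v\|_{C^1}$ gives $\ell(r)=2C_Kr$, which tends to $0$. Theorem~\ref{thm:main}(ii) then yields a unique solution in a fixed neighborhood of $\bar e$, with $u_\varepsilon\to\bar e$.

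\textbf{Main obstacle.} The delicate step is justifying \eqref{eq:K-H-Lip}, in particular the Lipschitz dependence of $c(h)$ on $h$. For this, subtract the two instances of \eqref{eq:c-H}, for $h_1$ and $h_2$, and take the inner product with $c_1-c_2$. Write $\Psi(H_1+c_1)-\Psi(H_2+c_2)$ as the sum of $\Psi(H_1+c_1)-\Psi(H_1+c_2)$ and $\Psi(H_1+c_2)-\Psi(H_2+c_2)$. Strong monotonicity bounds the first term from below, and the Lipschitz bound controls the second. This gives
\[
\tfrac{2\pi}{1+\lambda}\|c_1-c_2\|_H^2\le \tfrac{2\pi}{1-\lambda}\|H_1-H_2\|_\infty\|c_1-c_2\|_H,
\]
and $\|H_1-H_2\|_\infty\le2\pi\|h_1-h_2\|_\infty$. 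The derivative bound on $\mathcal K$ then follows from the Lipschitz estimate on $\Psi$. The zero-mean normalization controls $\|\mathcal Kh\|_\infty$ by $2\pi\|(\mathcal Kh)'\|_\infty$, since a zero-mean periodic function vanishes somewhere in the sense of its $H$-valued mean.
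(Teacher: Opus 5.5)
Your proposal is correct and follows the paper's route: you apply Theorem~\ref{thm:main}(ii) at $a_0=\bar e$ using $F'(\bar e)=-I_H$, $B_0=0$ and the Lipschitz modulus $\ell(r)=2C_Kr$ obtained from \eqref{eq:K-H-Lip}, and you additionally verify the complementary-correction property and the Lipschitz dependence of $c(h)$, which the paper only sketches. One wording fix is needed: an $H$-valued zero-mean periodic function need not vanish anywhere, but the bound $\|w\|_\infty\le 2\pi\|w'\|_\infty$ (applied to $w=\mathcal Kh_1-\mathcal Kh_2$) still holds, because $w(t)=\frac{1}{2\pi}\int_0^{2\pi}\bigl(w(t)-w(s)\bigr)\,ds$.
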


\begin{proof}
The estimate above gives
\[
 \|R(u,\varepsilon)-R(v,\varepsilon)\|_{C^1}
 \le C|\varepsilon|\,\|u-v\|_{C^1},
\]
so the Lipschitz modulus tends to zero with $|\varepsilon|$. Since
$F(\bar e)=0$, $F'(\bar e)=-I_H$, and $B_0=0$,
Theorem~\ref{thm:main}(ii) applies.
\end{proof}

\medskip

The two applications emphasize different parts of the theorem. For the
$p$-Laplacian, compactness gives existence and homogeneity produces the
nonclassical scale $|\varepsilon|^{1/(p-1)}$. In the Hilbert-space example,
the inverse is Lipschitz and the factor $|\varepsilon|$ makes the reduced map
a contraction, giving local uniqueness. In both cases the averaged equation
is simple; what changes is the regularity of the nonlinear complementary
problem.

\section*{Funding}

This research did not receive any specific grant from funding agencies in the public, commercial, or not-for-profit sectors.

\section*{Declaration of competing interest}

The authors declare that they have no known competing financial interests or personal relationships that could have appeared to influence the work reported in this paper.


\begin{thebibliography}{00}

\bibitem{Sanders2007}
J.A. Sanders, F. Verhulst, J. Murdock,
\textit{Averaging Methods in Nonlinear Dynamical Systems}, 2nd ed.,
Springer, New York, 2007.

\bibitem{CidMawhinZima2021}
J.A. Cid, J. Mawhin, M. Zima,
An abstract averaging method with applications to differential equations,
\textit{J. Differential Equations} 274 (2021) 231--250.

\bibitem{GeRen2004}
W. Ge, J. Ren,
An extension of Mawhin's continuation theorem and its application to
boundary value problems with a $p$-Laplacian,
\textit{Nonlinear Anal.} 58 (2004) 477--488.

\bibitem{ManasevichMawhin1998}
R. Man\'asevich, J. Mawhin,
Periodic solutions for nonlinear systems with $p$-Laplacian-like operators,
\textit{J. Differential Equations} 145 (1998) 367--393.

\bibitem{ManasevichMawhin2000}
R. Man\'asevich, J. Mawhin,
Boundary value problems for nonlinear perturbations of vector
$p$-Laplacian-like operators,
\textit{J. Korean Math. Soc.} 37 (2000) 665--685.

\bibitem{Mawhin2001}
J. Mawhin,
Periodic solutions of systems with $p$-Laplacian-like operators,
in: \textit{Nonlinear Analysis and its Applications to Differential Equations},
Birkh\"auser, Boston, 2001, pp. 37--63.

\bibitem{MawhinNovoaIFT}
J. Mawhin, J. Novoa,
An averaging method for periodic solutions of quasilinear equations in Banach spaces,
preprint, 2026.

\bibitem{RoM}
N. Rouch\'e, J. Mawhin,
\textit{Ordinary Differential Equations: Stability and Periodic Solutions},
Pitman, Boston, 1980.

\bibitem{Maw1}
J. Mawhin,
Functional analysis and boundary value problems,
in: J.K. Hale (Ed.),
\textit{Studies in Ordinary Differential Equations},
MAA Studies in Mathematics, vol. 14,
Mathematical Association of America, Washington, DC, 1977,
pp. 128--168.

\bibitem{Maw2}
J. Mawhin,
Resonance problems for some non-autonomous ordinary differential equations,
in: \textit{Stability and Bifurcation Theory for Non-Autonomous
Differential Equations},
Lecture Notes in Mathematics, vol. 2065,
Springer, Heidelberg, 2013, pp. 103--184.

\bibitem{GMMT}
M. Garc\'ia-Huidobro, R. Man\'asevich, J. Mawhin, S. Tanaka,
Periodic solutions for nonlinear systems of ODEs with generalized
variable-exponent operators,
\textit{J. Differential Equations} 388 (2024) 34--58.

\bibitem{Zeidler1990}
E. Zeidler,
\textit{Nonlinear Functional Analysis and its Applications, II/B:
Nonlinear Monotone Operators},
Springer, New York, 1990.

\end{thebibliography}
\end{document}